\newtheorem{theorem}{Theorem}
\newtheorem{corollary}[theorem]{Corollary}
\newtheorem{proposition}[theorem]{Proposition}
\theoremstyle{definition}
\theoremstyle{remark}
\newcommand{\comment}[1]{{}}
\newcommand\ZZ{\mathbb{Z}}
\newcommand{\familysty}[1]{\mathcal{#1}}
\newcommand{\familyF}{\familysty{F}}
\newcommand{\tr}{^{\top}}
\newcommand{\GL}[2]{\mathsf{GL}_{#1}(#2)}
\newcommand{\groupi}{G}          
\newcommand{\fatf}{\ZZ^m \times F_n}
\newcommand{\fatfab}{G}
\newcommand{\ab}{\operatorname{^{ab}}}
\newcommand{\End}[1]{\mathsf{End}\,#1}
\newcommand{\Aut}[1]{\mathsf{Aut}\,#1}
\newcommand{\Mon}[1]{\mathsf{Mon}\,#1}
\newcommand{\Epi}[1]{\mathsf{Epi}\,#1}
\newcommand{\WhP}[2]{\operatorname{WhP}(#1,#2)}
\newcommand{\ti}{\mbox{type\,(I)}}
\newcommand{\tii}{\mbox{type\,(II)}}
\newcommand{\detm}[1]{\det #1}
\begin{document}
\title{Whitehead problems for words in $\ZZ^m \times F_n$}%
\author{
\textbf{Jordi Delgado}\thanks{
The author thanks the hospitality of the Centre de Recerca Matemàtica (CRM-Barcelona) along the research programme on Automorphisms of Free Groups during which this preprint was finished; and gratefully acknowledge the support of \emph{Universitat Polit\`{e}cnica de Catalunya} through the PhD grant number 81--727 and the MEC (Spain) through project number MTM2011-25955.
} \\[3pt]
{\em Dept. Mat. Apl. III}\\ {\em Universitat Polit\`ecnica de Catalunya,} \\ {\em Manresa, Barcelona} \\
\texttt{jorge.delgado@upc.edu}  \\
\texttt{jdelgado.upc@gmail.com}\\[15pt]
}
\maketitle
\begin{abstract}
We solve the Whitehead problem for automorphisms, monomorphisms and endomorphisms in~$\ZZ^m \times F_n$
after giving an explicit description of each of these families of transformations.
\end{abstract}

\maketitle

We generically call \emph{Whitehead problems} for a finitely presented group $\groupi$ the problems consisting in, given two objects (of the same certain suitable kind $\mathcal{O}$) in $\groupi$ and a family $\familyF$ of transformations, decide whether there exists an element in $\familyF$ sending one object to the other.
Specifically we will write
$
\WhP{\mathcal{O}}{\familyF}
$
to mean the Whitehead problem with objects in $\mathcal{O}$ and transformations in $\familyF$, i.e.
\[
\WhP{\mathcal{O}}{\familyF} \ \equiv \  \Large{\text{\textquestiondown}}
\exists \varphi \in \familyF \normalsize{\text{ such that }}  o_1 \overset{\varphi}{\mapsto} o_2
\Large{\text{?}} \,
_{(o_1,o_2 \text{ in } \mathcal{O})}.
\]
It is customary to include as a part of the problem, the search of one of such transformations, in case that there exists. So will we.

The ``\,objects in $\groupi$\,'' usually considered include elements (i.e.\ words in the generators), subgroups and conjugacy classes, as well as tuples of them; while the typical families of transformations are those of automorphisms, monomorphisms, epimorphisms and endomorphisms of $G$; we denote them respectively by $\Aut{\groupi}$, $\Mon{\groupi}$, $\Epi{\groupi}$ and $\End{\groupi}$.

Using this scheme, the first problem of this kind (proposed and solved by Whitehead in \cite{whitehead_equivalent_1936}) is $\WhP{F_n}{\Aut{F_n}}$, where $F_n$ denotes the free group on $n$ generators.

\bigskip

In this note we will deal with Whitehead problems for words in finitely generated free-abelian times free groups\comment{($\ZZ^m \times F_n$)} (see \cite{delgado_algorithmic_2013} for full details). In sake of notational easiness we will hereafter usually abbreviate $\fatfab = \fatf$.
Concretely we will solve $\WhP{\fatfab}{\Aut{\fatfab}}$, $\WhP{\fatfab}{\Mon{\fatfab}}$ and $\WhP{\fatfab}{\End{\fatfab}}$. It is not surprising that the (already solved) corresponding problems for $\ZZ^m$ and $F_n$ emerge when considering Whitehead problems for~$\fatfab\comment{ = \fatf}$.

For the free-abelian groups the problems considered become those of the existence of solutions (of certain type) for integer matrix equations of the form $\mathbf{a}\cdot \mathbf{X} = \mathbf{b}$. This can be easily decided using linear algebra.

\begin{proposition} \label{thm:Whiteheads Z^m}
Let $m \geq 1$, then
\begin{enumerate}
\item[\rm{(i)}] $\WhP{Z^m}{\Aut{Z^m}}$ is solvable.
\item[\rm{(ii)}] $\WhP{Z^m}{\Mon{Z^m}}$ is solvable.
\item[\rm{(iii)}] $\WhP{Z^m}{\End{Z^m}}$ is solvable. \qed
\end{enumerate}
\end{proposition}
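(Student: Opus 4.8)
The plan is to reduce each problem to elementary integer linear algebra. Identify an element of $\ZZ^m$ with a row vector and an endomorphism of $\ZZ^m$ with the matrix $X\in M_m(\ZZ)$ it induces, acting by $\mathbf v\mapsto\mathbf v X$; then $\End{\ZZ^m}\leftrightarrow M_m(\ZZ)$, while $\Mon{\ZZ^m}$ corresponds to the matrices with $\det X\neq 0$ and $\Aut{\ZZ^m}$ to $\GL{m}{\ZZ}$, i.e.\ to $\det X=\pm1$. Under this dictionary each of the three Whitehead problems becomes: given $\mathbf a,\mathbf b\in\ZZ^m$, decide whether $\mathbf a X=\mathbf b$ has a solution $X$ in the prescribed class (and produce one).

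The whole matter then hinges on one classical fact: a vector $\mathbf v\in\ZZ^m$ completes to a basis of $\ZZ^m$ iff it is primitive, i.e.\ the gcd of its entries is $1$; consequently every $\mathbf v\neq\mathbf 0$ factors uniquely as $\mathbf v=d_{\mathbf v}\,\mathbf v'$ with $d_{\mathbf v}=\gcd(v_1,\dots,v_m)>0$ and $\mathbf v'$ primitive, and the gcd, this factorization, and a completion of a primitive vector to a basis are all computable (via Hermite normal form, say). Writing $d_{\mathbf a},d_{\mathbf b}$ for these gcds and setting $\gcd(\mathbf 0)=0$, I expect to prove: (iii) $\mathbf a X=\mathbf b$ is solvable over $M_m(\ZZ)$ iff $d_{\mathbf a}\mid d_{\mathbf b}$; (ii) it is solvable with $\det X\neq0$ iff $\mathbf a=\mathbf b=\mathbf 0$, or else $\mathbf a,\mathbf b\neq\mathbf 0$ and $d_{\mathbf a}\mid d_{\mathbf b}$; (i) it is solvable over $\GL{m}{\ZZ}$ iff $d_{\mathbf a}=d_{\mathbf b}$. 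Each condition is checked by computing two gcds, which yields the claimed solvability.

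For necessity: any endomorphism carries $\mathbf a=d_{\mathbf a}\mathbf a'$ to $d_{\mathbf a}\varphi(\mathbf a')$, whose entrywise gcd is a multiple of $d_{\mathbf a}$, so $d_{\mathbf a}\mid d_{\mathbf b}$; a monomorphism moreover cannot send a nonzero vector to $\mathbf 0$; and an automorphism has an inverse endomorphism, forcing $d_{\mathbf b}\mid d_{\mathbf a}$ as well, hence equality. For sufficiency I would build $X$ explicitly: complete $\mathbf a'$ and $\mathbf b'$ to bases of $\ZZ^m$ and, on the basis through $\mathbf a'$, send $\mathbf a'\mapsto (d_{\mathbf b}/d_{\mathbf a})\,\mathbf b'$ and the other basis vectors either to $\mathbf 0$ (for (iii)) or to the remaining vectors of the basis through $\mathbf b'$ (for (ii) and (i)); in the latter case the matrix in these bases is $\operatorname{diag}(d_{\mathbf b}/d_{\mathbf a},1,\dots,1)$, which is nonsingular, and when the scaling factor equals $1$ — exactly the case $d_{\mathbf a}=d_{\mathbf b}$ — it lies in $\GL{m}{\ZZ}$. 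The degenerate cases with $\mathbf a=\mathbf 0$ or $\mathbf b=\mathbf 0$ are immediate.

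There is no genuinely hard step here. The only point demanding a little care is case (ii): one must observe both that injectivity obstructs sending $\mathbf 0\neq\mathbf a$ to $\mathbf b=\mathbf 0$, and that conversely, as soon as $d_{\mathbf a}\mid d_{\mathbf b}$ with $\mathbf b\neq\mathbf 0$, the factor $d_{\mathbf b}/d_{\mathbf a}$ is nonzero, so the block-diagonal matrix above is indeed a monomorphism.
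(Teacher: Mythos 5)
Your proposal is correct and follows exactly the route the paper intends: it reduces each problem to deciding solvability of the integer matrix equation $\mathbf{a}X=\mathbf{b}$ with $X$ ranging over $M_m(\ZZ)$, nonsingular matrices, or $\GL{m}{\ZZ}$, which the paper dismisses as ``easily decided using linear algebra'' without giving details. Your explicit gcd criteria ($d_{\mathbf a}\mid d_{\mathbf b}$, plus nonvanishing for monomorphisms, equality for automorphisms) and the construction of $X$ via completion of primitive vectors to bases correctly fill in those omitted details.
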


The same problems for the free group $F_n$ are much more complicated. As
mentioned above, the case of automorphisms was already solved by Whitehead back in the 30's of the last century. The case of endomorphisms can be solved by writing a system of equations over $F_n$ (with unknowns being the images of a given free basis for $F_n$), and then solving it by the powerful Makanin's algorithm. Finally, the case of monomorphisms was recently solved by Ciobanu and Houcine.

\goodbreak

\begin{theorem}\label{thm:Whitehead Fn}

Let $n\geq 2$, then
\begin{enumerate}
\item[\rm{(i)}] \emph{[\textbf{Whitehead, \cite{whitehead_equivalent_1936}}]}
 $\WhP{F_n}{\Aut{F_n}}$ is solvable.
\item[\rm{(ii)}] \emph{[\textbf{Ciobanu-Houcine, \cite{ciobanu_monomorphism_2010}}]}
    $\WhP{F_n}{\Mon{F_n}}$ is solvable.
\item[\rm{(iii)}] \emph{[\textbf{Makanin, \cite{makanin_equations_1982}}]}
    $\WhP{F_n}{\End{F_n}}$ is solvable. \qed
\end{enumerate}
\end{theorem}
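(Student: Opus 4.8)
All three items are instances of known algorithms applied to $F_n$, so the plan is simply to recall the mechanism behind each and check that the word‑Whitehead problem is the right input for it.

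For (i) the plan is to invoke Whitehead's peak‑reduction method. Recall that the finitely many \emph{Whitehead automorphisms} generate $\Aut{F_n}$, and that Whitehead's lemma guarantees the following: if $w_1$ and $w_2$ are related by some automorphism, then one can pass from $w_1$ to $w_2$ through a finite chain of Whitehead automorphisms along which the cyclically reduced length never increases until it reaches the minimum over the $\Aut{F_n}$‑orbit. So the algorithm first greedily applies length‑decreasing Whitehead moves to each of $w_1,w_2$ until it reaches minimal‑length representatives $w_1',w_2'$; then, since there are only finitely many words of that length, it runs a breadth‑first search in the finite graph whose vertices are the words of that length and whose edges are the length‑preserving Whitehead moves, and tests whether $w_1'$ and $w_2'$ lie in the same connected component. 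Tracking the moves used produces the witnessing automorphism.

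For (iii) the plan is to reduce to a single equation over $F_n$. Fix a free basis $x_1,\dots,x_n$ of the domain. An endomorphism is determined by the tuple of images $(y_1,\dots,y_n)$, and it sends $w_1$ to $w_2$ exactly when $w_1(y_1,\dots,y_n) = w_2$ holds in $F_n$ with unknowns $y_1,\dots,y_n$. Makanin's algorithm decides the solvability of such an equation and exhibits a solution when one exists, which is precisely $\WhP{F_n}{\End{F_n}}$; the analogous problem for $\Epi{F_n}$ would follow by adjoining a constraint forcing $y_1,\dots,y_n$ to generate $F_n$, but that item is not needed here.

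For (ii) — the technically hardest case — the plan is to follow Ciobanu and Houcine. The obstacle, compared with (iii), is that injectivity of $x_i\mapsto y_i$ is not the solvability of a single equation: one must also express that $(y_1,\dots,y_n)$ freely generates the subgroup it spans. The strategy is to combine the equation $w_1(y_1,\dots,y_n)=w_2$ with suitable \emph{rational (recognizable) constraints} on the unknowns and then appeal to the decidability of systems of equations with rational constraints in free groups (the Makanin–Razborov machinery). Checking that the injectivity requirement can indeed be encoded by such constraints is the heart of the matter and is exactly what is carried out in \cite{ciobanu_monomorphism_2010}; granting that encoding, decidability of $\WhP{F_n}{\Mon{F_n}}$ follows at once.
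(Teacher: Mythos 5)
Your proposal is correct and matches the paper's treatment: the paper proves nothing here, it simply attributes the three items to Whitehead, Ciobanu--Houcine and Makanin (noting only that the endomorphism case is an equation over $F_n$ solved by Makanin's algorithm), which is exactly what you do, with some extra but accurate sketching of the underlying algorithms. The only nuance worth recording is that for single words the automorphism orbit problem for elements and for conjugacy classes coincide (compose with an inner automorphism), so your use of cyclically reduced length and length-preserving Whitehead moves is legitimate here.
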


So, the auto, mono  and endo Whitehead problems (for words) are solvable for both $\ZZ^m$ and $F_n$. For $\fatfab = \fatf$ though, these problems turn out to be more than the mere juxtaposition of the corresponding problems for its factors. That is because the endomorphisms of $\fatfab$ are more than pairs of endomorphisms of $\ZZ^m$ and $F_n$ as well. It is not difficult to obtain a complete description of them imposing the preservation of the (commutativity) relations defining $\fatfab$.

\begin{proposition}\label{prop:endos}
The endomorphisms of $\fatfab = \fatf$ are of the form
\[
\Psi_{\phi, \mathbf{Q}, \mathbf{P}} \ \colon \ (\mathbf{a}, u) \ {\mapsto} \ (\mathbf{a Q + u P}, u \phi)
\]
where
$\mathbf{u}=u\ab \in \ZZ^n$,
$\mathbf{Q}$ and $\mathbf{P}$ are integer matrices,
and $\phi: F_n \to F_n$ is either
\begin{enumerate}
  \item [\rm{(i)}]
  an endomorphism of $F_n$, or
  \item [\rm{(ii)}]
  a map $u \mapsto w^{\alpha(\mathbf{u})}$
  where $w$ is a non-proper power word in $F_n \setminus \{1\}$ and\\
  $
  \alpha(\mathbf{u}) = \mathbf{a}\mathbf{l}\tr +\mathbf{u}\mathbf{h}\tr \in \ZZ
  $
  for certain  $\mathbf{l}\in \ZZ^m \setminus \{\mathbf{0}\}$ and $\mathbf{h}\in \ZZ^n$.
\end{enumerate}
We will refer to them as \emph{\ti} and \emph{\tii} endomorphisms of $\fatfab$ respectively.
\end{proposition}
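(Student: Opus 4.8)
The plan is to split an arbitrary endomorphism $\Psi$ of $\fatfab = \fatf$ into its two coordinate maps and to use the structure of centralizers in $F_n$ to pin down the $F_n$-coordinate. Write $\Psi(\mathbf{a},u) = \bigl(f(\mathbf{a},u),\, g(\mathbf{a},u)\bigr)$, where $f\colon \fatfab\to\ZZ^m$ and $g\colon \fatfab\to F_n$ are the compositions of $\Psi$ with the two projections; since $\fatfab$ is literally a direct product, $f$ and $g$ are homomorphisms. As $\ZZ^m$ is abelian, $f$ factors through the abelianization $\fatfab\ab\cong\ZZ^m\oplus\ZZ^n$, so $f(\mathbf{a},u)=\mathbf{aQ}+\mathbf{uP}$ for integer matrices $\mathbf{Q}$ (of size $m\times m$) and $\mathbf{P}$ (of size $n\times m$), with $\mathbf{u}=u\ab$. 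This already produces the first coordinate and the matrices in the statement, uniformly for both types, with no constraints on $\mathbf{Q},\mathbf{P}$.

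All the content is in analyzing $g$. Its image $g(\fatfab)$ is a subgroup of $F_n$, hence free, so it is either non-abelian or (infinite or trivial) cyclic. Because $\ZZ^m$ is central in $\fatfab$, the subgroup $g(\ZZ^m)$ centralizes all of $g(\fatfab)$; but in a free group the centralizer of a non-abelian subgroup is trivial (every nontrivial element has infinite cyclic centralizer, which cannot contain two non-commuting elements). Hence, when $g(\fatfab)$ is non-abelian, $g$ kills $\ZZ^m$ and so descends to an endomorphism $\phi$ of $\fatfab/\ZZ^m\cong F_n$, giving a \ti\ map. When $g(\fatfab)$ is trivial, $g$ again descends to the trivial endomorphism of $F_n$, so this too is \ti. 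When $g(\fatfab)$ is nontrivial cyclic, let $w$ be its root, i.e.\ the (up to inverse) unique non-proper-power word with $g(\fatfab)\leq\langle w\rangle$ — this normalization is always achievable since every nontrivial cyclic subgroup of $F_n$ lies in a unique maximal cyclic one. Then $g(\mathbf{a},u)=w^{\alpha(\mathbf{a},u)}$ for a well-defined homomorphism $\alpha\colon\fatfab\to\ZZ$, which factors through $\fatfab\ab$ as $\alpha(\mathbf{a},u)=\mathbf{al}\tr+\mathbf{uh}\tr$ with $\mathbf{l}\in\ZZ^m$, $\mathbf{h}\in\ZZ^n$. If $\mathbf{l}=\mathbf{0}$, then $\alpha$, and hence $g$, factors through $F_n$, so $g$ is the genuine endomorphism $x_j\mapsto w^{h_j}$ and we are once more in case \ti; if $\mathbf{l}\neq\mathbf{0}$ this is precisely a \tii\ map, which shows the two cases are exhaustive (with the degenerate cyclic/trivial-image possibilities correctly filed under \ti).

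Finally I would verify the converse: every $\Psi_{\phi,\mathbf{Q},\mathbf{P}}$ of either type is indeed an endomorphism. Since $F_n$ is free and $\ZZ^m$ free abelian, it suffices to check that the prescribed images of generators satisfy the defining commutation relations of $\fatfab$; centrality of the $\ZZ^m$-part of the image makes this automatic once one notes that in the \tii\ case $(\mathbf{a},u)\mapsto w^{\alpha(\mathbf{a},u)}$ is additive in $(\mathbf{a},u)$ (because abelianization is), so the $F_n$-coordinate is a genuine homomorphism into $\langle w\rangle$. I expect the only non-formal ingredients to be the free-group centralizer fact and the existence/uniqueness of roots; the remaining difficulty is purely bookkeeping — getting the clean $\mathbf{l}=\mathbf{0}$ versus $\mathbf{l}\neq\mathbf{0}$ dichotomy right so that \ti\ and \tii\ together exhaust $\End{\fatfab}$, while the matrix description of $f$ and the relation-checking are routine.
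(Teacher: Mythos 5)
Your proposal is correct: both directions (every endomorphism has the stated form, and every such map is an endomorphism) are established, and the degenerate cyclic/trivial-image cases are correctly filed under \ti\ via the $\mathbf{l}=\mathbf{0}$ dichotomy. The route differs somewhat in organization from the paper's (which, as hinted in the text and detailed in the cited reference, works generator-by-generator: write down the images of the free-abelian generators $t_i$ and the free generators $x_j$ and impose the defining commutation relations, using that two elements of $F_n$ commute iff they are powers of a common element). You instead compose $\Psi$ with the two projections and analyze the image of the $F_n$-coordinate globally: abelian target forces the matrix form $\mathbf{aQ}+\mathbf{uP}$, while for the free coordinate you split on whether the image subgroup is non-abelian (triviality of the centralizer kills $g(\ZZ^m)$, so $g$ factors through $F_n$, giving \ti) or cyclic (uniqueness of roots gives $w^{\alpha(\mathbf{a},u)}$ with $\alpha$ linear, giving \tii\ when $\mathbf{l}\neq\mathbf{0}$). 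The underlying free-group input is the same (cyclic centralizers / common roots of commuting elements), but your image-based packaging makes exhaustiveness of the two types transparent with no bookkeeping of generator images, whereas the generator-and-relations computation is more directly algorithmic, producing the rows of $\mathbf{Q}$ and $\mathbf{P}$ explicitly, which is what the rest of the paper uses when solving the Whitehead problems.
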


Note that if $n=0$ then \ti\ and \tii\ endomorphisms do coincide. Otherwise, it turns out that \tii\ endomorphisms are a sort of degenerated case corresponding to a free contribution from the abelian part while all the injective and exhaustive endomorphisms of $\fatfab$ are of \ti. Indeed, viewing $\mathbf{Q}$ as the endomorphism
of $\ZZ^m$ given by right multiplying by $\mathbf{Q}$, we have the following quite natural characterization (note that the matrix $\mathbf{P}$
plays absolutely no role in this matter).
\begin{proposition}\label{prop:caract monos epis}
Let $\Psi$ be an endomorphism of $\fatfab = \fatf$, with $n\geq 2$. Then,
\begin{itemize}
\item[\rm{(i)}] $\Psi$ is a monomorphism if and only if it is of \ti\ with $\phi$ a monomorphism of $F_n$ and $\mathbf{Q}$ a monomorphism of $\ZZ^m$ (i.e.\ $\detm{\mathbf{Q}}\neq 0$).
\item[\rm{(ii)}] $\Psi$ is an epimorphism if and only if it is of \ti\ with $\phi$ an epimorphism of $F_n$ and $\mathbf{Q}$ an epimorphism of $\ZZ^m$ (i.e.\ $\detm{\mathbf{Q}} = \pm 1$).
\end{itemize}
\end{proposition}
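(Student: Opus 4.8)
The plan is to first dispose of \tii\ endomorphisms, and then, for a \ti\ endomorphism $\Psi=\Psi_{\phi,\mathbf{Q},\mathbf{P}}$, to separate the roles of the two factors by exploiting that for $n\geq 2$ the group $\fatfab$ is non-abelian with center $Z(\fatfab)=\ZZ^m\times\{1\}$ and $\fatfab/Z(\fatfab)\cong F_n$. A \tii\ endomorphism takes every element to one whose $F_n$-coordinate is a power of $w$, so its image lies in the abelian subgroup $\ZZ^m\times\langle w\rangle$; hence it can be neither injective (otherwise $\fatfab\cong\operatorname{im}\Psi$ would be abelian) nor surjective (otherwise $\fatfab=\operatorname{im}\Psi$ would be), and in both items the forward implication already forces $\Psi$ to be of \ti. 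From now on let $\Psi=\Psi_{\phi,\mathbf{Q},\mathbf{P}}$ with $\phi\in\End{F_n}$, and record the two elementary facts used below: $\Psi$ carries $Z(\fatfab)$ into itself, restricting there to the endomorphism $\mathbf{a}\mapsto\mathbf{aQ}$ of $\ZZ^m$; and $\Psi$ induces $\phi$ on $\fatfab/Z(\fatfab)\cong F_n$.

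For (i), the implication $(\Leftarrow)$ is a one-line check: $\Psi(\mathbf{a},u)=\Psi(\mathbf{a}',u')$ forces $u\phi=u'\phi$, hence $u=u'$ as $\phi$ is injective, hence $\mathbf{aQ}=\mathbf{a}'\mathbf{Q}$, hence $\mathbf{a}=\mathbf{a}'$ as $\mathbf{Q}$ is injective. For $(\Rightarrow)$, restricting an injective $\Psi$ to $Z(\fatfab)$ shows $\mathbf{a}\mapsto\mathbf{aQ}$ is injective, i.e.\ $\detm{\mathbf{Q}}\neq 0$. To see $\phi$ is injective, suppose for a contradiction that $u\phi=1$ with $u\neq 1$; set $N=\detm{\mathbf{Q}}$ and $\mathbf{a}=-u\ab\mathbf{P}\operatorname{adj}(\mathbf{Q})\in\ZZ^m$, so that the adjugate identity $\operatorname{adj}(\mathbf{Q})\mathbf{Q}=(\detm{\mathbf{Q}})\mathbf{I}$ gives $\mathbf{aQ}=-N\,u\ab\mathbf{P}=-(u^{N})\ab\mathbf{P}$ and hence $\Psi(\mathbf{a},u^{N})=(\mathbf{0},1)$; since $F_n$ is torsion-free and $N\neq 0$ we have $u^{N}\neq 1$, contradicting injectivity.

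For (ii), the implication $(\Leftarrow)$ is again direct: given $(\mathbf{b},v)$, pick $u$ with $u\phi=v$ (possible as $\phi$ is onto) and set $\mathbf{a}=(\mathbf{b}-u\ab\mathbf{P})\mathbf{Q}^{-1}\in\ZZ^m$ (legitimate as $\detm{\mathbf{Q}}=\pm 1$ means $\mathbf{Q}\in\GL{m}{\ZZ}$), so that $\Psi(\mathbf{a},u)=(\mathbf{b},v)$. For $(\Rightarrow)$: since $\Psi$ is onto, so is the endomorphism it induces on $\fatfab/Z(\fatfab)\cong F_n$, namely $\phi$. To handle $\mathbf{Q}$, pass to abelianizations: $\Psi\ab$ acts on $\fatfab\ab=\ZZ^m\oplus\ZZ^n$ by the block lower-triangular matrix $\left(\begin{smallmatrix}\mathbf{Q}&\mathbf{0}\\\mathbf{P}&\phi\ab\end{smallmatrix}\right)$, and $\Psi$ onto forces $\Psi\ab$ onto, whence $\pm 1=\detm{\Psi\ab}=\detm{\mathbf{Q}}\cdot\detm{\phi\ab}$ and therefore $\detm{\mathbf{Q}}=\pm 1$.

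The step I expect to be the genuine obstacle is the injectivity of $\phi$ in (i): this is the only place where one cannot merely pass to a quotient or to the abelianization — injectivity does not descend, and indeed an injective endomorphism of $F_n$ can have non-injective abelianization — so the explicit adjugate construction turning an element of $\ker\phi$ into a nontrivial element of $\ker\Psi$ is what makes it work.
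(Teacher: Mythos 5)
Your proof is correct, and since this note states Proposition~\ref{prop:caract monos epis} without proof (deferring the details to the cited longer paper), there is no in-text argument to measure it against; judged on its own it is a complete, self-contained verification. The structure is the natural one: \tii\ maps are killed at once because their image sits inside the abelian subgroup $\ZZ^m\times\langle w\rangle$ while $\fatfab$ is non-abelian for $n\geq 2$; the restriction of a \ti\ map to the center $Z(\fatfab)=\ZZ^m\times\{1\}$ isolates $\mathbf{Q}$, and the induced map on $\fatfab/Z(\fatfab)\cong F_n$ isolates $\phi$, with the abelianization determinant $\detm{\mathbf{Q}}\cdot\det(\phi\ab)=\pm1$ finishing the epimorphism direction. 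All computations check out, including the adjugate step: with $N=\detm{\mathbf{Q}}\neq 0$ and $\mathbf{a}=-u\ab\,\mathbf{P}\operatorname{adj}(\mathbf{Q})$ one indeed gets $\Psi(\mathbf{a},u^{N})=(\mathbf{0},1)$ with $u^{N}\neq 1$. One remark: the step you flag as the genuine obstacle (injectivity of $\phi$) admits a slightly lighter argument that avoids the adjugate and does not even need to know $\detm{\mathbf{Q}}\neq 0$ beforehand: if $1\neq u\in\ker\phi$, choose $v$ outside the (cyclic) centralizer of $u$ in $F_n$ (possible as $n\geq 2$) and set $u'=[u,v]$; then $u'\neq 1$, $u'\phi=1$, and $u'\ab=\mathbf{0}$, so $\Psi(\mathbf{0},u')=(\mathbf{0},1)$ directly contradicts injectivity. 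Your version is perfectly valid; the commutator trick merely trades the linear-algebra identity for the fact that commutators die in the abelianization, which is the mechanism that makes the matrix $\mathbf{P}$ irrelevant here (as the paper remarks).
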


The hopfianity of $\ZZ^m$ and $F_n$  together with this last proposition provide immediately the following results.

\begin{corollary}
$\fatf$ is hopfian and not cohopfian. \qed
\end{corollary}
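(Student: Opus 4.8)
The plan is to read off both statements from the description of the self-monomorphisms and self-epimorphisms of $\fatfab$ provided by Proposition~\ref{prop:caract monos epis}, together with the fact that $\ZZ^m$ and $F_n$ are hopfian. Recall that a group is \emph{hopfian} if each of its epimorphisms onto itself is an automorphism, and \emph{cohopfian} if each of its monomorphisms into itself is an automorphism; so in both cases the task is to upgrade a one-sided condition (surjectivity, resp.\ injectivity) to bijectivity.

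\emph{Hopfianity.} First I would take an arbitrary epimorphism $\Psi$ of $\fatfab$, assuming $n\geq 2$ (the cases $n\leq 1$ being handled at the end). By Proposition~\ref{prop:caract monos epis}(ii), $\Psi$ is of \ti\ with $\phi$ an epimorphism of $F_n$ and $\mathbf{Q}$ an epimorphism of $\ZZ^m$. Since $F_n$ is hopfian, $\phi$ is an automorphism of $F_n$, in particular a monomorphism; and since $\ZZ^m$ is hopfian, $\mathbf{Q}$ is an automorphism of $\ZZ^m$, so $\detm{\mathbf{Q}}\neq 0$. Feeding this back into Proposition~\ref{prop:caract monos epis}(i), $\Psi$ is also a monomorphism of $\fatfab$, hence an automorphism. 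Thus every self-epimorphism of $\fatfab$ is an automorphism and $\fatfab$ is hopfian.

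\emph{Non-cohopficity.} Here I would simply exhibit one self-monomorphism of $\fatfab$ that fails to be onto. A natural choice is the \ti\ endomorphism $\Psi_{\,\mathrm{id},\, 2\mathbf{I}_m,\, \mathbf{0}}\colon (\mathbf{a},u)\mapsto (2\mathbf{a},u)$, which multiplies the free-abelian coordinate by $2$ and fixes the free coordinate. Its free component $\phi=\mathrm{id}$ is a monomorphism of $F_n$ and $\detm{(2\mathbf{I}_m)}=2^m\neq 0$, so Proposition~\ref{prop:caract monos epis}(i) makes it a monomorphism of $\fatfab$; but $\detm{(2\mathbf{I}_m)}=2^m\neq\pm 1$ (as $m\geq 1$), so Proposition~\ref{prop:caract monos epis}(ii) forbids it from being an epimorphism. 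Hence it is a non-surjective monomorphism and $\fatfab$ is not cohopfian.

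In truth there is no substantial obstacle: the whole weight of the argument rests on Proposition~\ref{prop:caract monos epis}, which is already available, so the corollary is a bookkeeping exercise. The only point requiring a little care is the parameter range, since that proposition assumes $n\geq 2$: when $n\leq 1$ the group $\fatf$ is finitely generated abelian, hence residually finite and therefore hopfian, while non-cohopficity still follows from the same multiplication-by-$2$ map on the $\ZZ^m$ factor, for which $m\geq 1$ suffices.
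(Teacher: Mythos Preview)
Your proof is correct and follows exactly the route the paper intends: the paper states the corollary as immediate from Proposition~\ref{prop:caract monos epis} together with the hopfianity of $\ZZ^m$ and $F_n$, and gives no further argument. You supply the details the paper omits, including an explicit non-surjective monomorphism for the non-cohopfian half and the treatment of the degenerate range $n\leq 1$ (which the paper silently ignores, presumably working under the standing hypothesis $n\geq 2$).
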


\begin{corollary}
An endomorphism
of $\fatfab = \fatf$ ($n\geq 2$) is an automorphism if and only if it is of \ti\ with $\phi \in \Aut(F_n)$ and $\mathbf{Q} \in
    \GL{m}{\ZZ}$. \qed
\end{corollary}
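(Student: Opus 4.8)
The plan is to read this corollary off Proposition~\ref{prop:caract monos epis} together with the elementary fact that an endomorphism of a group is an automorphism precisely when it is at the same time a monomorphism and an epimorphism; the whole argument then amounts to intersecting the two descriptions supplied by that proposition.

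Carrying this out for the forward implication: if $\Psi$ is an automorphism then in particular it is a monomorphism, so by part~(i) of Proposition~\ref{prop:caract monos epis} it is of \ti\ with $\phi$ a monomorphism of $F_n$ and $\detm{\mathbf{Q}}\neq 0$; and it is also an epimorphism, so by part~(ii) it is of \ti\ with $\phi$ an epimorphism of $F_n$ and $\detm{\mathbf{Q}}=\pm 1$. Combining the two, $\Psi$ is of \ti, the endomorphism $\phi$ of $F_n$ is both injective and surjective hence an automorphism (so $\phi\in\Aut(F_n)$), and the two determinant conditions collapse to $\detm{\mathbf{Q}}=\pm 1$, i.e.\ $\mathbf{Q}\in\GL{m}{\ZZ}$. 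Conversely, if $\Psi$ is of \ti\ with $\phi\in\Aut(F_n)$ and $\mathbf{Q}\in\GL{m}{\ZZ}$, then $\phi$ is in particular both a monomorphism and an epimorphism of $F_n$ and $\detm{\mathbf{Q}}$ is both nonzero and $\pm 1$, so Proposition~\ref{prop:caract monos epis} yields that $\Psi$ is simultaneously a monomorphism and an epimorphism of $\fatfab$, hence an automorphism.

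There is essentially no obstacle here: the statement is a formal consequence of the preceding proposition, and (as that proposition already isolates) the matrix $\mathbf{P}$ plays no role throughout. If one prefers a hands-on verification of the ``if'' direction, I would instead exhibit the inverse explicitly: for $\Psi = \Psi_{\phi,\mathbf{Q},\mathbf{P}}$ with $\phi\in\Aut(F_n)$ and $\mathbf{Q}\in\GL{m}{\ZZ}$, using that abelianization is a functor (so that $(u\phi)\ab = u\ab\phi\ab$ and $(\phi\ab)^{-1}=(\phi^{-1})\ab$), one checks that
\[
\Psi_{\phi^{-1},\,\mathbf{Q}^{-1},\,-(\phi^{-1})\ab\mathbf{P}\mathbf{Q}^{-1}}
\]
is a two-sided inverse of $\Psi$, the only mildly fiddly point being the bookkeeping of matrix dimensions. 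Alternatively, since $\fatf$ is hopfian by the previous corollary, it would suffice to produce a one-sided inverse, or merely to note that $\Psi$ is surjective whenever $\phi$ and $\mathbf{Q}$ are; but this explicit inverse, while not needed for the proof, confirms the description and will be convenient for the algorithmic statements to follow.
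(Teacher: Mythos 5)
Your proposal is correct and matches the paper's (implicit) argument: the corollary is stated with a \qed precisely because it follows at once from Proposition~\ref{prop:caract monos epis} by intersecting the characterizations of monomorphisms and epimorphisms, exactly as you do. The optional explicit inverse $\Psi_{\phi^{-1},\,\mathbf{Q}^{-1},\,-(\phi^{-1})\ab\mathbf{P}\mathbf{Q}^{-1}}$ is a correct bonus but not needed.
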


Now we have the ingredients to prove the main result of this note.

\begin{theorem}\label{thm:Whitehead}
Let $\fatfab = \fatf$ with $m\geqslant 1$ and $n\geqslant 2$, then
\begin{itemize}
\item[\rm{(i)}] $\WhP{\fatfab}{\Aut{\fatfab}}$ is solvable,
\item[\rm{(ii)}] $\WhP{\fatfab}{\Mon{\fatfab}}$ is solvable,
\item[\rm{(iii)}] $\WhP{\fatfab}{\End{\fatfab}}$ is solvable.
\end{itemize}
\end{theorem}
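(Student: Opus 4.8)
\emph{Reduction.} I would first lean entirely on the structural results already in hand. By Proposition~\ref{prop:endos}, Proposition~\ref{prop:caract monos epis} and the subsequent corollary describing automorphisms, deciding whether some transformation of the prescribed family carries $(\mathbf{a},u)$ to $(\mathbf{b},v)$ amounts to deciding whether there exist data $\phi,\mathbf{Q},\mathbf{P}$ --- with $\phi\in\Aut{F_n}$ and $\mathbf{Q}\in\GL{m}{\ZZ}$ in case~(i); with $\phi\in\Mon{F_n}$ and $\detm{\mathbf{Q}}\neq 0$ in case~(ii); and with $\phi$ ranging over all \ti\ and \tii\ data in case~(iii) --- satisfying simultaneously the $F_n$-coordinate equation $u\phi=v$ (respectively $w^{\mathbf{a}\mathbf{l}\tr+\mathbf{u}\mathbf{h}\tr}=v$ for a \tii\ datum, where $\mathbf{u}=u\ab$) and the abelian-coordinate equation $\mathbf{a}\mathbf{Q}+\mathbf{u}\mathbf{P}=\mathbf{b}$. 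The key point is that in the parametrization of Proposition~\ref{prop:endos} the first equation involves only $\phi$ (together with the auxiliary data $w,\mathbf{l},\mathbf{h}$ of a \tii\ map) while the second involves only $\mathbf{Q}$ and $\mathbf{P}$, and the admissible tuples form a product set; so the problem splits into a ``free'' subproblem over $F_n$ and an ``abelian'' subproblem over $\ZZ^m$, which may be decided independently --- the answer being their conjunction, with the \ti/\tii\ alternatives disjoined beforehand in case~(iii).

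\emph{The free subproblem.} For cases~(i) and~(ii) and for the \ti\ half of~(iii), this is exactly $\WhP{F_n}{\Aut{F_n}}$, $\WhP{F_n}{\Mon{F_n}}$ and $\WhP{F_n}{\End{F_n}}$, solved by Whitehead, by Ciobanu--Houcine and by Makanin (Theorem~\ref{thm:Whitehead Fn}). For the \tii\ half of~(iii) one must further decide whether $v=w^{\mathbf{a}\mathbf{l}\tr+\mathbf{u}\mathbf{h}\tr}$ for some non-proper power $w\in F_n\setminus\{1\}$ and some $\mathbf{l}\in\ZZ^m\setminus\{\mathbf{0}\}$, $\mathbf{h}\in\ZZ^n$. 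Using that every $v\neq 1$ in a free group is a power $r^k$ of a unique-up-to-inversion non-proper power $r$ (with the exponent determined up to sign), and noting that $v=1$ is already realized by the trivial \ti\ endomorphism, this reduces to testing whether the integer $k$ --- or $0$ --- lies in the effectively computable set $\{\mathbf{a}\mathbf{l}\tr+\mathbf{u}\mathbf{h}\tr : \mathbf{l}\in\ZZ^m\setminus\{\mathbf{0}\},\ \mathbf{h}\in\ZZ^n\}\subseteq\ZZ$, which is a finite $\gcd$ computation (some care being needed in the corners where the restriction $\mathbf{l}\neq\mathbf{0}$ is felt, e.g.\ $m=1$ with $\mathbf{u}=\mathbf{0}$).

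\emph{The abelian subproblem.} It remains to decide whether $\mathbf{a}\mathbf{Q}+\mathbf{u}\mathbf{P}=\mathbf{b}$ admits an integer solution with $\mathbf{Q}$ unconstrained (case~(iii)), with $\detm{\mathbf{Q}}\neq 0$ (case~(ii)), or with $\mathbf{Q}\in\GL{m}{\ZZ}$ (case~(i)). Read columnwise, the unconstrained problem is solvable if and only if the $\gcd$ of all the entries of $\mathbf{a}$ and $\mathbf{u}$ divides every entry of $\mathbf{b}$ --- a Smith normal form computation. For the constrained problems one notes that, as $\mathbf{P}$ varies, $\mathbf{u}\mathbf{P}$ ranges over $g\ZZ^m$ coordinatewise, where $g$ is the $\gcd$ of the entries of $\mathbf{u}$ (with $g=0$ read as $\mathbf{u}=\mathbf{0}$), so the requirement becomes $\mathbf{a}\mathbf{Q}\equiv\mathbf{b}\pmod{g}$ subject to the determinant condition on $\mathbf{Q}$; since the reduction $\GL{m}{\ZZ}\to\GL{m}{\ZZ/g\ZZ}$ is onto and a nonsingular $\mathbf{Q}$ can always be found inside such a congruence class once $\mathbf{u}\neq\mathbf{0}$, this is again an effective arithmetic test on the images of $\mathbf{a}$ and $\mathbf{b}$ in $(\ZZ/g\ZZ)^m$, supplemented by the residual conditions governing $\mathbf{u}=\mathbf{0}$ (where, for instance, $\mathbf{a}\mathbf{Q}=\mathbf{b}$ with $\mathbf{Q}$ nonsingular forces $\mathbf{b}\neq\mathbf{0}$ whenever $\mathbf{a}\neq\mathbf{0}$). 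All of these steps are effective.

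\emph{Assembly and main difficulty.} The algorithm then runs the free and the abelian decision procedures, returns the conjunction of their outputs (first disjoining the \ti\ and \tii\ outcomes in case~(iii)), and, when the answer is affirmative, assembles the required transformation from the witnessing $\phi$ and the witnessing pair $(\mathbf{Q},\mathbf{P})$. The only genuinely deep ingredients are external --- the theorems of Whitehead, Ciobanu--Houcine and Makanin invoked for the free subproblem --- so the work specific to $\fatfab$ is the structural reduction (already carried out in Propositions~\ref{prop:endos}--\ref{prop:caract monos epis}) together with the bookkeeping above. I expect the main obstacle to be precisely that bookkeeping: handling the \tii\ endomorphisms and, above all, the determinant-constrained integer matrix equation uniformly through their degenerate configurations ($\mathbf{a}=\mathbf{0}$, $\mathbf{u}=\mathbf{0}$, $\mathbf{b}=\mathbf{0}$, $m=1$), where the generic counts break down.
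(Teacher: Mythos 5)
Your proposal is correct and follows essentially the same route as the paper's own (sketched) proof: the same splitting into the free-part condition $u\phi=v$, handled by Whitehead, Ciobanu--Houcine and Makanin plus a root/power test for \tii\ maps, and the abelian-part condition $\mathbf{a}\mathbf{Q}+\mathbf{u}\mathbf{P}=\mathbf{b}$, which the paper likewise reduces (via $\alpha=\gcd(\mathbf{a})$, $\mu=\gcd(\mathbf{u})$) to a gcd/congruence test with the extra constraint vacuous for monos and a $\gcd(x_1,\ldots,x_m)=1$ condition for autos, degenerate cases $\mathbf{a}=\mathbf{0}$ or $\mathbf{u}=\mathbf{0}$ being treated separately in both. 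One small correction: $\GL{m}{\ZZ}\to\GL{m}{\ZZ/g\ZZ}$ is not onto in general (its image is the determinant-$\pm1$ subgroup), so the automorphism test should be phrased as the paper's unit-content condition; this does not affect decidability (the check mod $g$ is finite, and for $m\geq 2$ the relevant orbits coincide anyway), and you already flag $m=1$ as the case needing separate care.
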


\begin{proof}[Sketch of the proof]
We are given two elements $(\mathbf{a},u),\, (\mathbf{b},v) \in \fatfab$, and have
to decide whether there exists an automorphism (resp. monomorphism, endomorphism) of $\fatf$ sending one to the other, and in the affirmative case, find one of them.

Using the previous descriptions for each type of transformations in $\fatf$ and separating the free-abelian and free parts, our problems reduce to deciding whether there exist integer matrices $\mathbf{P},\mathbf{Q}$ and a transformation $\phi$ of $F_n$ ($\mathbf{Q}$ and $\phi$ of certain kind depending on the case, see proposition \ref{prop:caract monos epis}) such that the two following independent conditions hold.
\begin{empheq}[right=\,\empheqrbrace .]{align}
u\phi =v \label{eq:cond free}\\
\mathbf{aQ}+\mathbf{uP}=\mathbf{b} \label{eq:cond free-abelian}
\end{empheq}
Note that the subproblem associated to condition \eqref{eq:cond free} becomes respectively the already solved $\WhP{F_n}{\Aut{F_n}}$, $\WhP{F_n}{\Mon{F_n}}$ and $\WhP{F_n}{\End{F_n}}$ in the cases of autos, monos, and endos of \ti, and is straightforward to check for endos of \tii.
Thus, if there is not any $\phi$ solving these problems (for $F_n$) then our corresponding problem (for $\fatfab$) has no solution either, and we are done.

Otherwise, the decision method provides such a $\phi$ and our problem reduces to solving the subproblem associated to condition \eqref{eq:cond free-abelian}:
given arbitrary elements $\mathbf{a} \in \ZZ^m$ and $\mathbf{u}\in \ZZ^n$, decide whether there exist integer matrices $\mathbf{P}$ and $\mathbf{Q}$ (satisfying $\detm{\mathbf{Q}} \neq 0$ in the case of monos and $\detm{\mathbf{Q}} = \pm 1$ in the case of autos) such that $\mathbf{aQ}+\mathbf{uP}=\mathbf{b}$.

If $\mathbf{a}=\mathbf{0}$ or $\mathbf{u}=\mathbf{0}$, these are well known results in linear algebra, otherwise write
\comment{$\mathbf{a} = \alpha \mathbf{a'}$ and $\mathbf{u} = \mu \mathbf{u'}$, where}
$0\neq \alpha = \gcd(\mathbf{a})$ and $0\neq \mu =\gcd(\mathbf{u})$. Then the problems reduce to test whether the following linear system of equations
\begin{equation}\label{eq:sist dem witehead paraules simple}
\left.
\begin{array}{lclcc}
\alpha \, x_1 & + & \mu \, y_1 & = & b_1 \\
 & \vdots &  & \vdots & \\ \alpha \, x_m & + &\mu \, y_m & = & b_m \\
 \end{array}
 \right\}
\end{equation}
has integral solutions $x_1, \ldots,x_m,y_1,\ldots,y_m \in \ZZ$ (with no extra condition in the case of endos, satisfying $(x_1,\ldots,x_m)\neq \mathbf{0}$ in the case of monos, and satisfying ${\gcd (x_1,\ldots,x_m) = 1}$ in the case of autos).

So, for the case of endos the decision is a standard argument in linear algebra. In the case of monomorphisms the condition $(x_1,\ldots,x_m)\neq \mathbf{0}$ turns out to be superfluous and the same argument as for endos works, while the more involved case of autos became a not very difficult exercise in arithmetic and is decidable as well.

Finally, observe that in any of the afirmative cases,  we can easily reconstruct a transformation $\Psi$ (of the corresponding type) such that $(\mathbf{a},u) \Psi = (\mathbf{b},v)$.
\end{proof}

We note that, very recently, a new version of the classical peak-reduction theorem has been developed by M. Day~\cite{day_full-featured_2012} for an arbitrary partially commutative group (see also~\cite{day_peak_2009}). These techniques allow the author to solve the Whitehead problem for this kind of groups, in its variant relative to tuples of
conjugacy classes and automorphisms.
As far as we know, $\WhP{\groupi}{\Mon{\groupi}}$ and $\WhP{\groupi}{\End{\groupi}}$ remain unsolved for a general partially commutative group $G$. Our theorem~\ref{thm:Whitehead} is a small contribution into this direction, solving these problems for free-abelian times free groups in a direct and self-contained form.

\bibliography{mybib}{}
\bibliographystyle{acm}

\end{document}